\def\atanh{\operatorname{arctanh}}
\newtheorem{assum}{Assumption}
\numberwithin{equation}{section}
\numberwithin{assum}{section}
\numberwithin{theorem}{section}
\begin{document}

\title{Some Worst-Case Datasets of Deterministic First-Order Methods for Solving Binary Logistic Regression}

\author{\name Yuyuan Ouyang \email yuyuano@clemson.edu \\
       \addr School of Mathematical and Statistical Sciences\\
       Clemson University\\
       Clemson, SC 29634, USA
       \AND
       \name Trevor Squires \email tsquire@clemson.edu \\
       \addr School of Mathematical and Statistical Sciences\\
       Clemson University\\
       Clemson, SC 29634, USA
       }

\editor{}

\maketitle

\begin{abstract}
	We present in this paper some worst-case datasets of deterministic first-order methods for solving large-scale binary logistic regression problems. Under the assumption that the number of algorithm iterations is much smaller than the problem dimension, with our worst-case datasets it requires at least $\cO(1/\sqrt{\varepsilon})$ first-order oracle inquiries to compute an $\varepsilon$-approximate solution. From traditional iteration complexity analysis point of view, the binary logistic regression loss functions with our worst-case datasets are new worst-case function instances among the class of smooth convex optimization problems. 
\end{abstract}

\begin{keywords}
  Binary Logistic Regression, First-Order Methods, Lower Complexity Bound
\end{keywords}	






\section{Introduction}
\label{sec:intro}

The following notations will be used throughout this paper. We denote natural logarithm by $\log(\cdot)$.
For any positive integer $k$, we use $\vc 0_k$ and $\vc 1_k$ to denote vectors of all zeros and ones, respectively. When the dimension $k$ is evident, we may remove the subscript and simply use $\vc 0$ and $\vc 1$. We use $\vc e_{t,k}$ to denote the $t$-th standard basis vector in $\R^k$: 
$\vc e_{t,k}^\top = (\vc 0_{t-1}^\top, 1, \vc 0_{k-t}^\top)^\top$. For any vector $\vc u$, we use $u^{(i)}$ to denote the $i$-th component of $\vc u$. 
The norm notation $\|\cdot\|$ is used for the Euclidean norm of a vector and the spectral norm of a matrix.

The main research questions of this paper are the following:

\begin{itemize}
	\em
	\item 
	For any deterministic first-order methods, what is the best possible computational performance on solving large-scale binary logistic regression problems?
	\item 
	For any deterministic first-order methods, what is their respective worst-case datasets of large-scale binary logistic regression problems that yield their worst possible computational performance?
\end{itemize}

Note that we will focus on providing an answer to the second question, since it will lead natural to an answer to the first question. We describe the binary logistic regression problems, and provide the definitions of ``deterministic first-order methods'' and ``computational performance'' in the sequel.

In this paper, we use the following description of binary logistic problems. Given any data matrix $A\in\R^{N\times n}$ and response vector $\vc b\in\{-1,1\}^N$, the binary logistic regression problem is a nonlinear optimization problem that minimizes objective function
\begin{align}
	\label{eq:general_problem}
	\min_{\vc x\in\R^n, y\in\R}\Phi_{A,\vc b}(\vc x,y):=h(A\vc x+y\vc 1) - \vc b^\top(A\vc x+y\vc 1),
\end{align}
where for any $\vc u\in\R^k$, $h$ is defined by 
\begin{align}
	\label{eq:general_h}
	\begin{aligned}
	h(\vc u)\equiv h_k(\vc u):=&\sumt[k]2\log\left[2\cosh\left(\frac{u^{(i)}}{2}\right)\right] 
	\\
	& = \sumt[k]2\log\left[\exp\left(\frac{u^{(i)}}{2}\right)+\exp\left(-\frac{u^{(i)}}{2}\right)\right].
	\end{aligned}
\end{align}
Here $\cosh$ is the hyperbolic cosine function. For convenience we remove the subscript $k$ in the definition of $h$ and allow the variable vector of $h$ to be of any dimension. Using $\vc a_i^\top$ to denote the $i$-th row of $A$, from \eqref{eq:general_problem} and \eqref{eq:general_h} we have
\begin{align}
\label{eq:Phi_reform}
\begin{aligned}
	\Phi(\vc x, y) = & \sumt[N]2\log\left[\exp\left(\frac{\vc a_i^\top \vc x+y}{2}\right) + \exp\left(-\frac{\vc a_i^\top \vc x+y}{2}\right)\right]-b^{(i)}\left(\vc a_i^\top \vc x + y\right)
	\\
	= & \sumt[N]2\log\left[\exp\left(\frac{b^{(i)}(\vc a_i^\top \vc x+y)}{2}\right) + \exp\left(-\frac{b^{(i)}(\vc a_i^\top \vc x+y)}{2}\right)\right]-b^{(i)}\left(\vc a_i^\top \vc x + y\right)
	\\
	= & \sumt[N]2\log\left[1 + \exp\left(-{b^{(i)}(\vc a_i^\top \vc x+y)}\right)\right],
\end{aligned}
\end{align}
which is a commonly used form of binary logistic regression problems with parameter vector $\vc x$ and intercept $y$. Here in the second equality we use the facts that $\cosh$ is an even function and $b^{(i)}\in\{-1,1\}$. Note that we can build an analogy between logistic and least squares problems through the formulation  \eqref{eq:general_problem}: if $h(\cdot):=\|\cdot\|^2/2$ we have a least squares problem immediately. In fact, such analogy has been exploited in \cite{bach2010self} in the analysis of statistical properties of logistic regression.

In this paper, we will make an simplification and assume that we know the value of intercept $y^*$ in an optimal solution $(\vc x^*, y^*)$. Problem \eqref{eq:general_problem} then simplifies to a problem of estimating the parameter vector $\vc x$ from
\begin{align}
	l^*_{A,\vc b}:=\min_{\vc x\in\R^n}l_{A,\vc b}(\vc x):=\Phi_{A,\vc b}(\vc x, y^*).
\end{align}
Indeed, in our designed worst-case dataset,we can show that the intercept $y^*=0$. As a consequence, it suffices to solve a logistic model with homogeneous linear predictor:
\begin{align}
	\label{eq:general_l_homogeneous}
	\begin{aligned}
	l_{A,b}^*:=\min_{\vc x\in\R^n}l_{A,b}(\vc x):=& h(A\vc x) - \vc b^\top A\vc x
	\\
	= & \sumt[N]2\log\left[1 + \exp\left(-{b^{(i)}(\vc a_i^\top \vc x)}\right)\right].
	\end{aligned}
\end{align}

The term ``deterministic first-order method'' is defined by the following oracle description: we say that an iterative algorithm $\cM$ for convex optimization $\min_{\vc x\in\R^n} f(\vc x)$ is a deterministic first-order method if it accesses the information of objective function $f$ through a deterministic first-order \emph{oracle} $\cO_f:\R^n\times \R^n$, such that $\cO_f(\vc x)=(f(\vc x), f'(\vc x))$ for any inquiry $\vc x$, where $f'(\vc x)$ is a subgradient of $f$ at $\vc x$. Specifically, $\cM$ can be described by a problem independent initial iterate $\vc x_0$ and a sequence of rules $\{\cI_t\}_{t=0}^\infty$ such that
\begin{align}
\label{eq:xOiter}
\vc x_{t+1} = \cI_t(\cO_f(\vc x_0), \ldots, \cO_f(\vc x_t)),\ \forall t\ge 0.
\end{align}
Without loss of generality, we can assume that $\vc x_0 = \vc 0$. We also assume that the dimension of the parameter vector $\vc x$ is large and we can only afford $t\ll n$ oracle inquiries. 

The computational performance of $\cM$ is evaluated through its solution accuracy $f(\hat {\vc x}) - f^*$ or $\|\hat{\vc x}-\vc x^*\|$, in which $\hat {\vc x}$ is an approximate solution computed by $\cM$. Without loss of generality, we can assume that $\vc x_t$'s are both inquiry points to the oracle $\cO$ and the approximate solution computed by $\cM$.

\subsection{Related Work}
\label{sec:related_work}

There had been many existing deterministic first-order algorithms that can be applied to solve \eqref{eq:general_l_homogeneous}. For example, applying Nesterov's accelerated gradient method \cite{nesterov2004introductory}, it is known that it takes at most $\cO(1)(1/\sqrt{\varepsilon})$ oracle inquiries to compute an approximate solution $\hat {\vc x}$ to \eqref{eq:general_l_homogeneous} such that $l_{A,\vc b}(\vc x) - l_{A,\vc b}^*\le \varepsilon$. Here $\cO(1)$ is a constant independent of $\varepsilon$. Such result is known as the \emph{upper complexity bound}. Upper complexity bounds depict  achievable computational performance on solving an problem. 

Our research question described at the beginning of this section is focusing on the \emph{lower complexity bound} of a problem, namely, the performance limit of deterministic first-order methods. For convex optimization problems $f^*:=\min_{\vc x}f(\vc x)$, the lower complexity bound is concerned with the least number of inquiries to the deterministic first-order oracle in order to compute an $\varepsilon$-approximate solution $\hat {\vc x}$ such that $f(\hat {\vc x}) - f^*\le \varepsilon$. In the following we list the available lower complexity bound results on deterministic first-order methods for convex optimization $f^*:=\min_{\vc x}f(\vc x)$.

\begin{itemize}
	\item When $f$ is convex (possibly nonsmooth), the lower complexity bound is $\cO(1)(1/\varepsilon^2)$   \cite{nemirovski1983problem,nesterov2004introductory,guzman2015lower,woodworth2016tight}.
	\item When $f$ is weakly smooth convex with parameter $\rho$ (see the definition of weakly smooth in \cite{juditsky2014deterministic}), the lower complexity bound is $\cO(1)(1/\varepsilon^{2/(1+3\rho)})$
	\cite{juditsky2014deterministic,guzman2015lower}.
	\item When $f$ is convex nonsmooth with bilinear saddle point structure, the lower complexity bound is $\cO(1)(1/\varepsilon)$ \cite{ouyang2018lower}.
	\item When $f$ is smooth convex, the lower complexity bound is $\cO(1)(1/\sqrt{\varepsilon})$ \cite{nemirovski1992information,nesterov2004introductory,guzman2015lower,drori2017exact,woodworth2016tight,carmon2017lower,carmon2017lowerII,diakonikolas2018lower}.	
	\item When $f$ is strongly convex smooth, the lower complexity bound is $\cO(1)\log(1/\varepsilon)$ \cite{nesterov2004introductory,woodworth2016tight}.
\end{itemize}

Two remarks are in place for the above list of lower complexity bounds. First, all the lower complexity bounds have been demonstrated to match available upper complexity bounds, namely, there exists available deterministic first-order algorithms that achieves the lower complexity bounds. Such algorithms are known as \emph{optimal algorithms}, since the lower complexity bounds provide the verification that their respective theoretical computational performance would not be improvable anymore. Second, we can observe from the above list that the ``smaller'' the problem class is, the better lower complexity bounds could be. For example, the class of smooth convex optimization problems is a subclass of general convex optimization problems, hence it is possible to expect an algorithm with $\cO(1/\sqrt{\varepsilon})$ upper complexity rather than $\cO(1/\varepsilon^2)$. 

\subsection{Motivation and main results}
Our research question can be motivated by the second remark above, i.e., a subclass might yield better lower complexity bounds. Note that the class of binary logistic regression problems is a subclass of the smooth convex problem class. Is it possible to design algorithms that targets solely on logistic regression, and performs better than the $\cO(1)(1/\sqrt{\varepsilon})$ complexity bounds for smooth convex optimization? Unfortunately, such question has not yet been answered in the literature. Although there had been lower complexity bounds $\cO(1)(1/\sqrt{\varepsilon})$ on smooth convex optimization (see Section \ref{sec:related_work}), the worst-case instance functions provided for smooth convex optimization are either based on convex quadratic functions \cite{nemirovski1992information,nesterov2004introductory,drori2017exact,woodworth2016tight,carmon2017lower,carmon2017lowerII} or smoothing (through infimal convolution) of maximum of affine functions \cite{guzman2015lower,diakonikolas2018lower}.

The above discussion is based on the traditional perspective of complexity analysis of convex optimization, namely, finding worst-case functions among the problem class and explore the performance limits of algorithms. 
It is important to point out that our research question can also be viewed from one other perspective. In data analysis practice, we will usually designing algorithms that are tailored for specific models. Consequently, we are interested at exploring the performance limit of algorithms with respect to the \emph{worst-case dataset}. From this perspective, our research question asks what the worst-case dataset that yields the worst performance of any deterministic first-order method. Note that the two aforementioned perspectives are equivalent; however, the latter one offers a more data-oriented argument.

In this paper, we describe some worst-case datasets of binary logistic regression problems, such that for any first-order methods, it requires at least $\cO(1)(1/\sqrt{\varepsilon})$ first-order oracle inquiries to obtain an $\varepsilon$-approximate solution. Such datasets can be used as certificates of optimal deterministic first-order algorithms for binary logistic regression. Also, from the perspective of traditional complexity analysis, our results also provide new worst-case functions for smooth convex optimization.

In Section \ref{sec:linear_span}, we describe the construction of a worst-case dataset for deterministic first-order method that satisfy a mild assumption (see Assumption \ref{assum:linear} below). In Section \ref{sec:generalLB}, we provide worst-case datasets for any given deterministic first-order method.

\section{Worst-Case Dataset Under Linear Span Assumption}
\label{sec:linear_span}

In this section, we make the following simple assumption regarding the iterates produced by a deterministic first-order method $\cM$:
\begin{assum}
	\label{assum:linear}
	The iterate sequence $\{\vc x_0,\vc x_1,\ldots\}$ produced by $\cM$ satisfies 
	\begin{align}
	\vc x_t\in\Span\{\nabla f(\vc x_0),\ldots,\nabla f(\vc x\tp)\},\ \forall t\ge 1.
	\end{align}
\end{assum}

\vgap

Recall that we have already made two assumptions in Section \ref{sec:intro} on $\cM$ without loss of generality, namely, that $\vc x_0=\vc 0$ and that $\vc x_t$ is both the inquiry point and the output of approximate solution. By the above assumption, the new iterate $\cM$ produces always lie inside the linear span of past gradients. Throughout this paper, we refer to Assumption \ref{assum:linear} as the {\em linear span assumption}. Such linear span assumption, in the first look, does not seem to be one that can be made without loss of generality. However, we would like to emphasize here that the purpose of introducing the linear span assumption is only for us to demonstrate the lower complexity bound derivation in a straightforward manner; we will show in Section \ref{sec:generalLB} that the linear span assumption can be removed, using the technique in the seminal work \cite{nemirovski1992information}.

\subsection{A Special Class of Datasets}
\label{sec:special_dataset}
We describe our construction of a special class of datasets for binary logistic regression. Such datasets will be used throughout this paper to construct worst-case datasets for binary logistic regression. Suppose that $\sigma>\zeta>0$ are two fixed real numbers. Given any positive integer $k$, denote
\begin{align}
	\label{eq:Wk}
	W_k:=\begin{pmatrix}
		 & & & -1 & 1
		\\
		 & & -1 & 1 &
		\\
		 & \iddots	& \iddots
		\\
		 -1 & 1 & 
		\\
		1 
	\end{pmatrix}\in\R^{k\times k}
\end{align}
and
\begin{align}
\label{eq:Akbk}
A_k:=\begin{pmatrix}
2\sigma W_k\\-2\zeta W_k\\ -2\sigma W_k \\ 2\zeta W_k  
\end{pmatrix}\in\R^{4k\times k},\ \vc b_k:=\begin{pmatrix}
	\vc 1_k\\\vc 1_k\\-\vc 1_k\\-\vc 1_k\\
\end{pmatrix}\in\R^{4k}.
\end{align}
We then denote functions $f_k:\R^k\to\R$ and $\Phi_k:\R^{k+1}\to\R$ by 
\begin{align}
\label{eq:fk}
f_k(\vc x):=h(A_k\vc x) - \vc b_k^\top A_k\vc x_k
\text{ and }
\Phi_k(\vc x, y):=h(A_k\vc x + y\vc 1_k) - \vc b_k^\top (A_k\vc x_k + y\vc 1_k)
.
\end{align}
Comparing \eqref{eq:fk} with previous descriptions of $l_{A,b}$ and $\Phi_{A,b}$ in \eqref{eq:general_problem} and \eqref{eq:general_l_homogeneous} respectively, $f_k$ and $\Phi_k$ are clearly binary logistic regression objective functions with data matrix $A_k$ and response vector $\vc b_k$: we are using logistic regression to train a classifier for two datasets whose entries have opposite signs. Recall that $\sigma>\zeta>0$; this assumption is to avoid duplicate data entries. Note that 
\begin{align}
	\|A_k \vc u\|^2 = & 8(\sigma^2+\zeta^2)\|W_k\vc u\|^2 
	\\
	= & 8(\sigma^2+\zeta^2)\left[\left(u^{(k)}-u^{(k-1)}\right)^2+\ldots+\left(u^{(2)} - u^{(1)}\right)^2 + \left(u^{(1)}\right)^2\right] 
	\\
	\le & 16(\sigma^2+\zeta^2)\left[\left(u^{(k)}\right)^2+\left(u^{(k-1)}\right)^2+\ldots+\left(u^{(2)}\right)^2 + \left(u^{(1)}\right)^2 + \left(u^{(1)}\right)^2\right]
	\\
	\le & 32(\sigma^2+\zeta^2)\|\vc u\|^2,
\end{align}
and consequently 
\begin{align}
	\label{eq:normAk}
	\|A_k\|\le 4\sqrt{2(\sigma^2+\zeta^2)}.
\end{align}

In the following lemma, we describe the optimal solutions that minimizes $f_k$ and $\Phi_k$ respectively. By the definition of $f_k$ in \eqref{eq:fk} and noting the convexity of binary logistic regression problems, it suffices to solve
\begin{align}
	\label{eq:grad_fk}
	\nabla f_k(\vc x) = A_k^\top \nabla h(A_k \vc x) - A_k^\top \vc b_k = 0.
\end{align} 
Noting the definition of $h$ in \eqref{eq:general_h}, we have
\begin{align}
	\label{eq:grad_h}
	\nabla h(\vc u) = \tanh\left(\frac{\vc u}{2}\right) := \left(\tanh\left(\frac{u^{(1)}}{2}\right), \ldots, \tanh\left(\frac{u^{(k)}}{2}\right)\right)^\top,\ \forall \vc u\in\R^k,\ \forall k.
\end{align}
Here $\tanh$ is the hyperbolic tangent function. 
Throughout this paper, we will slightly abuse the notation $\tanh(\vc u)$ and allow the scalar function $\tanh(\cdot)$ to be applied to any vector $\vc u$ component-wisely.

\begin{lem}
	\label{lem:xstar}
	For any $\sigma>\zeta>0$, there always exists $c>0$ that satisfies
	\begin{align}
		\label{eq:c}
		\sigma\tanh(\sigma c) + \zeta\tanh(\zeta c) = \sigma - \zeta.
	\end{align}
	Moreover,
	\begin{align}
		\label{eq:xstar}
		\vc x^*:=c(1,2,\ldots,k)^\top
	\end{align}
	is the unique optimal solution to problem
	\begin{align}
		\label{eq:fk_problem}
		f_k^*:=\min_{\vc x\in\R^k}f_k(\vc x)
	\end{align}
	with 
	\begin{align}
		\label{eq:fkstar}
		f_k^* = 8k\log 2 + 4k\left\{\log\cosh(\sigma c) + \log\cosh (\zeta c) - (\sigma - \zeta) c\right\}.
	\end{align}
	In addition, $(\vc x^*, 0)$ is the unique optimal solution to $\min_{\vc x\in\R^n, y\in\R}\Phi_k(\vc x, y)$.
\end{lem}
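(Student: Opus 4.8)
The plan is to dispatch the claims in order: the existence of $c$, then the optimality and uniqueness of $\vc x^*$ for $f_k$, then the formula for $f_k^*$, and finally the statement about $\Phi_k$. For the first claim I would set $g(c):=\sigma\tanh(\sigma c)+\zeta\tanh(\zeta c)$ and note that $g$ is continuous, $g(0)=0$, and $g(c)\to\sigma+\zeta$ as $c\to\infty$ since $\tanh\to 1$; because $\sigma>\zeta>0$ we have $0<\sigma-\zeta<\sigma+\zeta$, so the intermediate value theorem produces a root $c>0$ of \eqref{eq:c}, and since $g'(c)=\sigma^2\operatorname{sech}^2(\sigma c)+\zeta^2\operatorname{sech}^2(\zeta c)>0$ this $c$ is in fact unique.

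The engine behind the remaining claims is the identity $W_k\vc x^*=c\vc 1_k$. Indeed, from \eqref{eq:Wk} the first $k-1$ entries of $W_k\vc u$ are the consecutive differences $u^{(i+1)}-u^{(i)}$ and the last entry is $u^{(1)}$, so applying $W_k$ to $\vc x^*=c(1,2,\ldots,k)^\top$ gives $c\vc 1_k$. Hence by \eqref{eq:Akbk}, $A_k\vc x^*$ is the vector in $\R^{4k}$ whose four length-$k$ blocks equal $2\sigma c$, $-2\zeta c$, $-2\sigma c$, $2\zeta c$; by \eqref{eq:grad_h} and oddness of $\tanh$, $\nabla h(A_k\vc x^*)$ has the four blocks $\tanh(\sigma c)$, $-\tanh(\zeta c)$, $-\tanh(\sigma c)$, $\tanh(\zeta c)$ (each times $\vc 1_k$). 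Plugging into $\nabla f_k(\vc x^*)=A_k^\top(\nabla h(A_k\vc x^*)-\vc b_k)$ from \eqref{eq:grad_fk} and using the block form $A_k^\top=(2\sigma W_k^\top,\,-2\zeta W_k^\top,\,-2\sigma W_k^\top,\,2\zeta W_k^\top)$, all four contributions collapse onto a common multiple of $W_k^\top\vc 1_k$ with scalar coefficient $4[\sigma\tanh(\sigma c)+\zeta\tanh(\zeta c)-(\sigma-\zeta)]$, which is $0$ by \eqref{eq:c}; convexity of $f_k$ then certifies $\vc x^*$ as a minimizer.

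For uniqueness I would observe that $W_k$ is invertible — from $\|W_k\vc u\|=0$ one extracts $u^{(1)}=0$, then $u^{(2)}=u^{(1)}=0$, and so on — so $A_k$ has full column rank, and since $h''(t)=\tfrac14\operatorname{sech}^2(t/2)>0$ the Hessian $A_k^\top\operatorname{diag}(h'')A_k$ of $f_k$ is positive definite; hence $f_k$ is strictly convex and its minimizer is unique. The value $f_k^*$ then follows by direct substitution: the $4k$ entries of $A_k\vc x^*$ are $\pm2\sigma c$ and $\pm2\zeta c$, each appearing $k$ times, so (using that $\cosh$ is even and \eqref{eq:general_h}) $h(A_k\vc x^*)=4k\log[2\cosh(\sigma c)]+4k\log[2\cosh(\zeta c)]=8k\log 2+4k\log\cosh(\sigma c)+4k\log\cosh(\zeta c)$, while $\vc b_k^\top A_k\vc x^*=4k(\sigma-\zeta)c$ straight from \eqref{eq:Akbk}; subtracting gives \eqref{eq:fkstar}.

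Finally, for $\Phi_k$, since $A_k\vc x^*+0\cdot\vc 1=A_k\vc x^*$, the $\vc x$-block of $\nabla\Phi_k(\vc x^*,0)$ is $\nabla f_k(\vc x^*)=\vc 0$, and the $y$-derivative is $\vc 1^\top(\nabla h(A_k\vc x^*)-\vc b_k)$, which sums the four blocks above to $k[\tanh(\sigma c)-1-\tanh(\zeta c)-1+1-\tanh(\sigma c)+1+\tanh(\zeta c)]=0$; convexity of $\Phi_k$ then yields optimality of $(\vc x^*,0)$. Uniqueness here follows once $[A_k\ \vc 1]$ is shown to have full column rank: if $A_k\vc v=\vc 1$, its first two blocks give $2\sigma W_k\vc v=\vc 1_k=-2\zeta W_k\vc v$, forcing $\vc 1_k=\vc 0$, which is absurd; so $\vc 1$ lies outside the range of $A_k$ and the Hessian of $\Phi_k$ is positive definite. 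I expect the only real care to go into the block bookkeeping that reduces the stationarity equation to the scalar identity \eqref{eq:c} via $W_k\vc x^*=c\vc 1_k$; the two full-column-rank arguments underpinning uniqueness are routine.
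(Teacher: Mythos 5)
Your proposal is correct and follows essentially the same route as the paper: intermediate value theorem for $c$, the identity $W_k\vc x^*=c\vc 1_k$ to reduce $\nabla f_k(\vc x^*)=0$ to the scalar equation \eqref{eq:c}, direct substitution for the value \eqref{eq:fkstar}, and the vanishing $y$-derivative for the intercept claim. The one place you go beyond the paper is uniqueness, where the paper simply invokes strict convexity of the logistic loss while you substantiate it by showing $A_k$ (and $[A_k\ \vc 1]$) has full column rank so the Hessian is positive definite — a worthwhile addition; the only slip is the constant in the second derivative of each summand of $h$ (it is $\tfrac12\operatorname{sech}^2(t/2)$, not $\tfrac14\operatorname{sech}^2(t/2)$), which is immaterial since only positivity is used.
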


\begin{proof}
	Note that there always exists $c>0$ that satisfies \eqref{eq:c} since the function $r(c):= \sigma\tanh(\sigma c) + \zeta\tanh(\zeta c) - \sigma + \zeta$ is continuous with $r(0) = -\sigma + \zeta <0$ and $\lim_{c\to\infty}r(c) = 2\zeta >0$. 
	
	By the definitions of $W_k$ and $\vc x^*$ in \eqref{eq:Wk} and \eqref{eq:xstar}, we observe that $W_k\vc x^* = c\vc 1_k$ and $W_k\vc 1_k = \vc e_{k,k}$. Using this observation and the descriptions of $\nabla h$, $A_k$, and $\vc b_k$ in \eqref{eq:grad_h} and \eqref{eq:Akbk} respectively, and noting that $\tanh$ is an odd function and is assumed to apply to any vector component-wisely, we have
	\begin{align}
		\label{eq:grad_hAkx}
		\nabla h(A_k\vc x^*) = & \tanh\left[\frac{1}{2}\begin{pmatrix}
		2\sigma c \vc 1_k\\ -2\zeta c\vc 1_k\\ -2\sigma c \vc 1_k\\2\zeta c\vc 1_k 
		\end{pmatrix}\right] = \begin{pmatrix}
		\tanh(\sigma c) \vc 1_k\\ -\tanh(\zeta c)\vc 1_k\\ -\tanh(\sigma c) \vc 1_k\\\tanh(\zeta c)\vc 1_k 
		\end{pmatrix},
		\\
		A_k^\top \nabla h(A_k\vc x^*)  
		= & 4[\sigma \tanh(\sigma c) + \zeta \tanh(\zeta c)]\vc e_{k,k},
		\\
		\label{eq:AkTbk}
		A_k^\top \vc b_k = & 2(\sigma - \zeta + \sigma - \zeta)W_k\vc 1_k = 4(\sigma - \zeta)\vc e_{k,k}.
	\end{align}
	Using the above results, the description of $\nabla f$ in \eqref{eq:grad_fk}, and the relation \eqref{eq:c} that $c$ satisfies, 
	we have $\nabla f_k(\vc x^*)=0$. 
	Noting that binary logistic loss functions are strictly convex, we conclude that $\vc x^*$ is the unique minimizer of $f_k$. Recalling the definition of $h$ in \eqref{eq:general_h}, noting that $\cosh$ is an even function, and using the computation of $A_k^\top \vc b_k$ in \eqref{eq:AkTbk}, we have
	\begin{align*}
		f_k^* = & f_k(\vc x^*) = h(A_k\vc x^*) - \left(\vc x^*\right)^\top A_k^\top \vc b_k 
		= h(A_k\vc x^*) - 4k(\sigma - \zeta)c
		\\
		= & 2k\left\{\log\left[2\cosh(\sigma c)\right]+\log\left[2\cosh(-\zeta c)\right] + \log\left[2\cosh(-\sigma c)\right]+\log\left[2\cosh(\zeta c)\right]\right\} - 4k(\sigma-\zeta)c
		\\
		= & 8k\log 2 + 4k\left\{\log\cosh(\sigma c) + \log\cosh (\zeta c) - (\sigma - \zeta) c\right\}.
	\end{align*}

	Furthermore, by the descriptions of $\vc b_k$ and $\nabla h(A_k\vc x^*)$ in \eqref{eq:Akbk} and \eqref{eq:grad_hAkx} respectively, computing the partial derivative of $\Phi$ in \eqref{eq:fk} with respective to $y$ at $0$, we have
	\begin{align}
	& \left.\frac{\partial}{\partial y}\right|_{y=0}\Phi_k(\vc x^*, y) = \vc 1_k^\top \nabla h(A_k\vc x^*) - \vc b_k^\top \vc 1_k = 0.
	\end{align}
	Noting also that $\nabla_x \Phi_k(\vc x^*, 0) = \nabla f_k(\vc x^*) = 0$, we conclude that $(\vc x^*,0)$ is the unique minimizer of the strictly convex binary logistic loss $\Phi_k(\vc x, y)$.

\end{proof}

\subsection{Lower Complexity Bound Under Linear Span Assumption}
In this section, we study the lower complexity bound of deterministic first-order methods for solving the logistic regression problem \eqref{eq:fk_problem}, under the linear assumption described in Assumption \ref{assum:linear}.

\begin{lem}
	\label{lem:KJ}
	Suppose that $k$ and $t$ are fixed positive integers such that $t\le k$. Define
	\begin{align}
		\label{eq:Kt}
		\cK_{t,k}:=\Span\{\vc e_{k-t+1,k},\ldots,\vc e_{k,k}\},\ \forall k, \forall 1\le t\le k.
	\end{align}
	Then for all $\vc x\in\cK_{t,k}$, we have $A_k\vc x, \nabla h(A_k\vc x)\in \cJ_{t,k}$ and $A_k^\top \nabla h(A_k\vc x), \nabla f_k(\vc x)\in\cK_{t+1,k}$, where
	\begin{align}
		\label{eq:Jt}
		\cJ_{t,k}:=\Span\{\vc e_{1,4k},\ldots,\vc e_{t,4k},\vc e_{k+1,4k},\ldots,\vc e_{k+t,4k}, \vc e_{2k+1,4k},\ldots,\vc e_{2k+t,4k}, \vc e_{3k+1,4k},\ldots,\vc e_{3k+t,4k}\}.
	\end{align}
	Moreover,
	\begin{align}
		\label{eq:fkftstar}
		\min_{\vc x\in\cK_{t,k}}f_k(\vc x) = 8(k-t)log 2 + \min_{\vc u\in\R^t}f_t(\vc u).
	\end{align}
\end{lem}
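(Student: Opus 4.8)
The claim has three parts, and I would tackle them in the order they are stated, since the structural inclusions in the first part are exactly what make the reduction formula in the third part work.

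\textbf{Step 1: The inclusion $A_k\vc x,\nabla h(A_k\vc x)\in\cJ_{t,k}$ for $\vc x\in\cK_{t,k}$.} I would work directly from the bidiagonal structure of $W_k$ in \eqref{eq:Wk}. If $\vc x\in\cK_{t,k}$ then only the last $t$ coordinates $x^{(k-t+1)},\ldots,x^{(k)}$ are nonzero, and since row $j$ of $W_k$ involves only coordinates $x^{(k-j)}$ and $x^{(k-j+1)}$ (with the last row involving only $x^{(1)}$), the product $W_k\vc x$ is supported on its first $t$ coordinates. Stacking the four blocks $\pm 2\sigma W_k, \pm 2\zeta W_k$ of $A_k$, the vector $A_k\vc x$ is supported on exactly the index set defining $\cJ_{t,k}$ in \eqref{eq:Jt}. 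Since $\nabla h$ acts coordinatewise (see \eqref{eq:grad_h}) and $\tanh(0)=0$, applying $\nabla h$ preserves the support, so $\nabla h(A_k\vc x)\in\cJ_{t,k}$ as well.

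\textbf{Step 2: The inclusion $A_k^\top\nabla h(A_k\vc x),\nabla f_k(\vc x)\in\cK_{t+1,k}$.} Here I would use the transpose picture: $W_k^\top$ is \emph{upper} bidiagonal, so for a vector $\vc v$ supported on its first $t$ coordinates, $W_k^\top\vc v$ is supported on its first $t+1$ coordinates — but I want support on the \emph{last} $t+1$ coordinates. The resolution is that $A_k^\top$ maps $\cJ_{t,k}$-supported vectors through $W_k^\top$ applied to sign-combinations of the four blocks; what matters is that $A_k^\top\vc w$ for $\vc w\in\cJ_{t,k}$ lands in $\Span\{W_k^\top\vc e_{1,k},\ldots,W_k^\top\vc e_{t,k}\}$, and $W_k^\top\vc e_{j,k}$ is supported on coordinates $k-j,\ldots$ — wait, I need to recheck the indexing of $W_k$ against \eqref{eq:AkTbk}. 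The cleanest route is to mimic the computation already done in Lemma \ref{lem:xstar}: there it is shown $A_k^\top\vc b_k=4(\sigma-\zeta)\vc e_{k,k}$ using $W_k\vc 1_k=\vc e_{k,k}$, so the relevant fact is $W_k^\top\vc e_{j,4k\text{-block}}\in\cK_{?,k}$. I would state and verify the precise support propagation $W_k:\cK_{t,k}\to(\text{first }t)$ and $W_k^\top:(\text{first }t)\to\cK_{t+1,k}$ by reading off the two bidiagonal structures, then assemble the four blocks. Since $\nabla f_k(\vc x)=A_k^\top\nabla h(A_k\vc x)-A_k^\top\vc b_k$ and $A_k^\top\vc b_k=4(\sigma-\zeta)\vc e_{k,k}\in\cK_{1,k}\subseteq\cK_{t+1,k}$, the inclusion for $\nabla f_k$ follows.

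\textbf{Step 3: The reduction formula \eqref{eq:fkftstar}.} Given Step 1, minimizing $f_k$ over $\cK_{t,k}$ is a $t$-dimensional problem. I would exhibit an explicit isometry between $\cK_{t,k}$ and $\R^t$ and check that $f_k$ restricted to $\cK_{t,k}$ decomposes as a constant plus $f_t$ composed with that isometry. Concretely, for $\vc x\in\cK_{t,k}$ the product $A_k\vc x$ has its nonzero entries (in each of the four blocks) equal to the entries of $A_t\vc u$ for the corresponding $\vc u\in\R^t$ — this is where the bidiagonal $W$ matrices are designed so that the "active window" of $W_k$ acting on the last $t$ coordinates is literally a copy of $W_t$, up to the one boundary row. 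The $8(k-t)\log 2$ term comes from the $4(k-t)$ coordinates of $A_k\vc x$ that are forced to zero, each contributing $2\log[2\cosh(0)]=2\log 2$ to $h$, i.e. $4(k-t)\cdot 2\log 2$; the linear term $\vc b_k^\top A_k\vc x$ contributes nothing extra from the zeroed coordinates. The remaining active part is exactly $h(A_t\vc u)-\vc b_t^\top A_t\vc u=f_t(\vc u)$.

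\textbf{Main obstacle.} The routine algebra is not the difficulty; the delicate point is getting the boundary row of $W_k$ right so that the "active window" is genuinely $W_t$ and not $W_t$ with a perturbed last row — in particular verifying that $W_k$ acting on $\cK_{t,k}$ produces precisely the same pattern as $W_t$ acting on $\R^t$, including the standalone bottom row $1$ in \eqref{eq:Wk}. I expect this bookkeeping (and the companion check for $W_k^\top$ landing in $\cK_{t+1,k}$ rather than $\cK_{t,k}$, which is what forces the "$t+1$" and drives the linear-span lower bound) to be where care is needed. Everything else reduces to the coordinatewise action of $\tanh$ and the block structure of $A_k$ already recorded in \eqref{eq:grad_hAkx}–\eqref{eq:AkTbk}.
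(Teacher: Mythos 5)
Your proposal is correct and follows essentially the same route as the paper's proof: the support computation showing $W_k\vc x$ has active block $W_t\vc u$ in its first $t$ coordinates, the companion fact that $W_k^\top$ sends first-$t$-supported vectors into $\cK_{t+1,k}$ (which you correctly flag and resolve after the momentary "upper bidiagonal" misread — $W_k$ is anti-bidiagonal and symmetric), together with $A_k^\top\vc b_k=4(\sigma-\zeta)\vc e_{k,k}\in\cK_{1,k}$, and the $8(k-t)\log 2$ accounting plus $\vc b_k^\top A_k\vc x=\vc b_t^\top A_t\vc u$ for the reduction \eqref{eq:fkftstar}. Beyond the routine bookkeeping you already identify, there is no gap.
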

\begin{proof}
	Fix $\vc x\in \cK_{t,k}$. By \eqref{eq:Kt} we have $\vc x^\top = (\vc 0_{k-t}^\top, \vc u^\top)^\top$ for some $\vc u\in\R^t$. Thus by the definition of $W_k$ in \eqref{eq:Wk},
	\begin{align}
		\label{eq:Wku}
		W_k\vc x = \left(\begin{array}{c;{2pt/2pt}c}
		\ \ \ \ \ _{-1} & W_t \\ \hdashline[2pt/2pt]
		W_{k-t} & 
		\end{array}\right)
		\begin{pmatrix}
			\vc 0_{k-t} \\ \vc u
		\end{pmatrix} = \begin{pmatrix}
			W_t\vc u\\ \vc 0_{k-t}
		\end{pmatrix}.
	\end{align}
	Using the above result, the descriptions of $A_k$ and $\nabla h$ in \eqref{eq:Akbk} and \eqref{eq:grad_h} respectively, and the definition of $\cJ_{t,k}$ in \eqref{eq:Jt}, we have
	\begin{align}
		\label{eq:tmp1}
		A_k\vc x = \begin{pmatrix}
			2\sigma W_t \vc u \\ \vc 0_{k-t} \\ -2\zeta W_t \vc u\\ \vc 0_{k-t} \\ -2\sigma W_t \vc u \\ \vc 0_{k-t} \\ 2\zeta W_t \vc u\\ \vc 0_{k-t}
		\end{pmatrix}\in \cJ_{t,k},\ \nabla h(A_k\vc x) = \begin{pmatrix}
		\tanh(\sigma W_t \vc u) \\ \vc 0_{k-t} \\ -\tanh(\zeta W_t \vc u)\\ \vc 0_{k-t} \\ -\tanh(\sigma W_t \vc u) \\ \vc 0_{k-t} \\ \tanh(\zeta W_t \vc u)\\ \vc 0_{k-t}
		\end{pmatrix}\in \cJ_{t,k}.
	\end{align}
	Also, note that for all $\vc v\in\R^t$, the definition of $W_k$ in \eqref{eq:Wk} results in 
	\begin{align}
		\label{eq:WkTv}
		W_k^\top \begin{pmatrix}
			\vc v\\ \vc 0_{k-t}
		\end{pmatrix} = W_k \begin{pmatrix}
		\vc v\\ \vc 0_{k-t}
		\end{pmatrix} = \left(\begin{array}{c;{2pt/2pt}c}
		\ \ \ \ \ _{-1} & W_{k-t} \\ \hdashline[2pt/2pt]
		W_t & 
		\end{array}\right)\begin{pmatrix}
		\vc v\\ \vc 0_{k-t}
		\end{pmatrix} = \begin{pmatrix}
			\vc 0_{k-t-1} \\ -\vc v^{(t)} \\ W_{t}\vc v
		\end{pmatrix} \in\cK_{t+1,k}.
	\end{align}
	Combining \eqref{eq:tmp1} and \eqref{eq:WkTv}, and using the definition of $A_k$ in \eqref{eq:Akbk} we have
	\begin{align}
		A_k^\top \nabla h(A_k\vc x) = &
			2\sigma W_k^\top \begin{pmatrix}
			\tanh(\sigma W_t \vc u) \\ \vc 0_{k-t}
			\end{pmatrix} - 2\zeta W_k^\top \begin{pmatrix}
			-\tanh(-\zeta W_t \vc u) \\ \vc 0_{k-t}
			\end{pmatrix} 
			\\
			& - 2\sigma W_k^\top \begin{pmatrix}
			-\tanh(\sigma W_t \vc u) \\ \vc 0_{k-t}
			\end{pmatrix} + 2\zeta W_k^\top \begin{pmatrix}
			\tanh(\zeta W_t \vc u) \\ \vc 0_{k-t}
			\end{pmatrix} 
			\\
			\in & \cK_{t+1,k}.
	\end{align}
	Using the above result and noting the value of $A_k^\top \vc b_k$ in \eqref{eq:AkTbk}, we conclude that
	\begin{align}
		\nabla f_k(\vc x) = A_k^\top \nabla h(A_k\vc x) - A_k^\top \vc b_k \in \cK_{t+1,k}.
	\end{align}
	
	To finish the proof it suffices to prove \eqref{eq:fkftstar}. By the definition of $h$ in \eqref{eq:general_h}, the computations in \eqref{eq:tmp1}, and the setting $\vc x^\top = (\vc 0_{k-t}^\top, \vc u^\top)^\top$ we have
	\begin{align}
		h(A_k\vc x) = & \sumt[t]2\log(2\cosh(\sigma W_tu)^{(i)}) + (k-t)\cdot 2\log(2\cosh(0))
		\\ 
		& + \sumt[t]2\log(2\cosh(-\zeta W_tu)^{(i)}) + (k-t)\cdot 2\log(2\cosh(0))
		\\
		& + \sumt[t]2\log(2\cosh(-\sigma W_tu)^{(i)}) + (k-t)\cdot 2\log(2\cosh(0))
		\\
		& + \sumt[t]2\log(2\cosh(\zeta W_tu)^{(i)}) + (k-t)\cdot 2\log(2\cosh(0))
		\\
		= & 8(k-t)\log 2 + h(A_t\vc u).
	\end{align}
	Also, noting that $\vc x^\top = (\vc 0_{k-1}^\top, \vc u^\top)^\top$, by the description of $A_k^\top\vc b_k$ in \eqref{eq:AkTbk} we have
	\begin{align}
		\vc b_k^\top A_k \vc x = 4(\sigma - \zeta) u^{(t)} = \vc b_t^\top A_t \vc u.
	\end{align}
	Hence we conclude from the definition of $f_k(\vc x)$ in \eqref{eq:fk} that $f_k(\vc x) = 8(k-t)\log 2+ f_t(\vc u)$, and thus \eqref{eq:fkftstar} holds.
\end{proof}

As an immediate consequence of the above lemma, in the following we show that the linear span assumption of a first-order method $\cM$ will lead to $\vc x_t\in\cK_{t,k}$ when minimizing $f_k(\vc x)$.

\begin{lem}
	\label{lem:xt_in_Kt}
	Suppose that $\cM$ is any deterministic first-order method that satisfies Assumption \ref{assum:linear}. When $\cM$ is applied to minimize $f_k(\vc x)$ in \eqref{eq:fk}, we have
	$\vc x_t\in\cK_{t,k}$
	for all $1\le t\le k$.
\end{lem}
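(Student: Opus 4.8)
The plan is to argue by induction on $t$. The base case $t=1$ is essentially immediate: since $\vc x_0 = \vc 0$, Assumption~\ref{assum:linear} forces $\vc x_1 \in \Span\{\nabla f_k(\vc x_0)\}$, and $\vc x_0 = \vc 0 \in \cK_{0,k}$ (the zero subspace), so by the last conclusion of Lemma~\ref{lem:KJ} applied with $t=0$ we get $\nabla f_k(\vc x_0) \in \cK_{1,k}$; hence $\vc x_1 \in \cK_{1,k}$. (One should be slightly careful about the $t=0$ boundary of Lemma~\ref{lem:KJ}: $\vc x = \vc 0$ trivially gives $\nabla f_k(\vc 0) = -A_k^\top \vc b_k = -4(\sigma-\zeta)\vc e_{k,k} \in \cK_{1,k}$ directly from \eqref{eq:AkTbk}, so one can sidestep any convention issue by treating $t=1$ by hand.)

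For the inductive step, assume $\vc x_s \in \cK_{s,k}$ for all $1 \le s \le t$. Since the subspaces are nested, $\cK_{1,k} \subseteq \cK_{2,k} \subseteq \cdots \subseteq \cK_{t,k}$, so in fact $\vc x_s \in \cK_{t,k}$ for every $s \le t$. Now apply Lemma~\ref{lem:KJ}: for each such $\vc x_s \in \cK_{s,k} \subseteq \cK_{t,k}$, the lemma gives $\nabla f_k(\vc x_s) \in \cK_{s+1,k} \subseteq \cK_{t+1,k}$. Therefore $\Span\{\nabla f_k(\vc x_0),\ldots,\nabla f_k(\vc x_t)\} \subseteq \cK_{t+1,k}$, and Assumption~\ref{assum:linear} then yields $\vc x_{t+1} \in \cK_{t+1,k}$, completing the induction. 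Note the argument only runs while $t+1 \le k$ so that $\cK_{t+1,k}$ is defined, which matches the range $1 \le t \le k$ in the statement.

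I do not expect a genuine obstacle here — this is the standard "invariant subspace" bookkeeping that underlies all linear-span lower bounds, and Lemma~\ref{lem:KJ} has already done the real work by showing that one gradient evaluation advances the relevant coordinate block by exactly one index. The only points requiring a little care are: (i) handling the $t=1$ base case cleanly given that $\cK_{0,k}$ was not explicitly defined, which is why I would dispatch it directly from \eqref{eq:grad_fk} and \eqref{eq:AkTbk}; and (ii) invoking the nestedness $\cK_{s,k} \subseteq \cK_{t,k}$ for $s \le t$, which is transparent from \eqref{eq:Kt} but should be stated so that Lemma~\ref{lem:KJ} can legitimately be applied to each earlier iterate.
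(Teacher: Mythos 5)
Your proposal is correct and follows essentially the same route as the paper's own proof: the base case is dispatched by computing $\nabla f_k(\vc 0)=-A_k^\top\vc b_k\in\Span\{\vc e_{k,k}\}=\cK_{1,k}$ via \eqref{eq:AkTbk}, and the inductive step combines Lemma~\ref{lem:KJ} with the nestedness of the subspaces $\cK_{t,k}$ and Assumption~\ref{assum:linear}. No substantive difference or gap to report.
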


\begin{proof}
	We prove the $t=1$ case first. By Assumption \ref{assum:linear},  $\vc x_1\in\Span\{\nabla f_k(\vc x_0)\}$. Recalling the assumption that $\vc x_0=\vc 0$, we have $\nabla f_k(\vc x_0) = \nabla f_k(\vc 0) = -A_k^\top \vc b_k$, and by the value of $A_k^\top \vc b_k$ in \eqref{eq:AkTbk} we have $\nabla f_k(\vc x_0)\in\Span\{\vc e_{k,k}\}$. Noting the definition of $\cK_{t,k}$ in \eqref{eq:Kt} we have $\vc x_1\in\cK_{1,k}$. 
	
	Let us use induction and assume that $\vc x_i\in\cK_{i,k}$ for all $1\le i\le s<k$. By Lemma \ref{lem:KJ}, we have $\nabla f_k(\vc x_i)\in\cK_{i+1,k}$ for all $s$. Noting Assumption \ref{assum:linear} we have 
	$$\vc x_{s+1}\in\Span\{\nabla f_k(\vc x_0),\ldots,\nabla f_k(\vc x_s)\} \subseteq \cK_{s+1,k}. $$
	Hence the induction is complete and we conclude that $\vc x_t\in\cK_{t,k}$
	for all $1\le t\le k$.
\end{proof}

By the description of $f_k^*$ in \eqref{eq:fkstar}, the relation \eqref{eq:fkftstar}, and Lemma \ref{lem:xt_in_Kt}, we conclude that the error of iterate $\vc x_t$ in terms of objective function value can be lower bounded by
\begin{align}
\label{eq:obj_error}
\begin{aligned}
	f_k(\vc x_t) - f_k^* \ge & \min_{\vc x\in\cK_{t,k}}f_k(\vc x) - f_k^* = 8(k-t)\log 2 + f_t^* - f_k^*
	\\
	= & 4(k-t)\left[(\sigma - \zeta) c - \log\cosh(\sigma c) - \log\cosh (\zeta c)\right].
\end{aligned}
\end{align}
In the following lemma, we provide a simplification of the above lower bound:

\begin{lem}
	\label{lem:Delta_est}
	For any real numbers $\sigma$ and $\zeta$ that satisfy $2\zeta>\sigma>\zeta>0$, we have
	\begin{align}
	(\sigma - \zeta)c - \log\cosh(\sigma c) - \log\cosh (\zeta c) \ge c^2\sigma^2C(\sigma/\zeta),
	\end{align}
	where $C(\sigma/\zeta)$ is a universal constant that depends only on the ratio $\sigma/\zeta$. In particular, When $\sigma/\zeta = 1.3$, we have
	\begin{align}
	\label{eq:C_special}
	C(1.3) > \frac{1}{2}.
	\end{align}
\end{lem}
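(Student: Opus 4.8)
The plan is to reduce the two-parameter estimate to a one-parameter one. Set $r:=\sigma/\zeta$ and $s:=\zeta c$. Substituting $\sigma=r\zeta$ into \eqref{eq:c} and dividing by $\zeta$ turns the defining relation into $r\tanh(rs)+\tanh(s)=r-1$, so $s$ depends on $r$ alone, and $s>0$ by Lemma \ref{lem:xstar}. Consequently $\sigma c=rs$, $\zeta c=s$, and $c^2\sigma^2=r^2s^2$ all depend only on $r$, and so does
\begin{align}
\Delta(r):=(\sigma-\zeta)c-\log\cosh(\sigma c)-\log\cosh(\zeta c)=(r-1)s-\log\cosh(rs)-\log\cosh(s).
\end{align}
Taking $C(\sigma/\zeta):=\Delta(r)/(r^2s^2)$ already makes the inequality an identity and depends only on the ratio; the substantive claims are therefore that $\Delta(r)>0$ (so $C$ is a positive constant) and that $C(1.3)>1/2$. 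I would establish both at once by proving an explicit lower bound on $\Delta(r)$.

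The key step is the integral identity obtained from $\log\cosh(x)=\int_0^x\tanh(t)\,dt$ together with the consequence $(r-1)s=rs\tanh(rs)+s\tanh(s)$ of the defining relation, namely
\begin{align}
\Delta(r)=\int_0^{rs}\bigl[\tanh(rs)-\tanh(t)\bigr]\,dt+\int_0^{s}\bigl[\tanh(s)-\tanh(t)\bigr]\,dt,
\end{align}
in which both integrands are nonnegative because $\tanh$ is increasing. For a lower bound I would use that $\operatorname{sech}^2$ is decreasing on $[0,\infty)$: for $0\le t\le a$ one has $\tanh(a)-\tanh(t)=\int_t^a\operatorname{sech}^2(u)\,du\ge(a-t)\operatorname{sech}^2(a)$, hence $\int_0^a[\tanh(a)-\tanh(t)]\,dt\ge\tfrac12a^2\operatorname{sech}^2(a)$. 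Applying this with $a=rs$ and $a=s$, and using $\operatorname{sech}^2(s)\ge\operatorname{sech}^2(rs)$ since $0<s\le rs$, gives $\Delta(r)\ge\tfrac12(r^2+1)s^2\operatorname{sech}^2(rs)$; in particular $\Delta(r)>0$, and one may take $C(r):=\tfrac{r^2+1}{2r^2}\operatorname{sech}^2(rs)$.

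Finally I would make $\operatorname{sech}^2(rs)$ explicit. From the defining relation and $\tanh(s)>0$ we get $\tanh(rs)=\bigl(r-1-\tanh(s)\bigr)/r<1-1/r$, so $rs<\operatorname{arctanh}(1-1/r)$ and $\operatorname{sech}^2(rs)=1-\tanh^2(rs)\ge1-(1-1/r)^2=(2r-1)/r^2$. Thus $C(r):=\dfrac{(r^2+1)(2r-1)}{2r^4}$ is an admissible positive constant depending only on $r$, and at $r=1.3$ it equals $\dfrac{2.69\cdot1.6}{2\cdot(1.3)^4}=\dfrac{4.304}{5.7122}>\dfrac12$, which is \eqref{eq:C_special}. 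None of the steps is a genuine obstacle; the part deserving care is the chain of elementary monotonicity facts for $\tanh$ and $\operatorname{sech}^2$ and the final arithmetic, and one should note that the crude bound $rs<\operatorname{arctanh}(1-1/r)$ is only good enough for the stated conclusion because $r=1.3$ is comfortably below $2$ (it leaves $C$ around $0.75$, and closer to $0.79$ with a sharper bound on $rs$), consistent with the hypothesis $2\zeta>\sigma$.
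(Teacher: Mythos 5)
Your proof is correct, and it takes a genuinely different route from the paper's. Both arguments start from the same pivot: using \eqref{eq:c} to rewrite $(\sigma-\zeta)c$ as $\sigma c\tanh(\sigma c)+\zeta c\tanh(\zeta c)$, so that the quantity to bound becomes $\sum_a\bigl[a\tanh(a)-\log\cosh(a)\bigr]$ over $a\in\{\sigma c,\zeta c\}$. The paper then brackets $c$ in an interval $[c_{lb},c_{ub}]$ expressed through $\atanh$, exploits monotonicity of $a\mapsto a\tanh(a)-\log\cosh(a)$ to evaluate at the endpoints, and defines $C$ as a ratio involving $c_{lb}$ and $c_{ub}$ whose value at $\sigma/\zeta=1.3$ is checked \emph{numerically}. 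You instead use the integral representation $a\tanh(a)-\log\cosh(a)=\int_0^a[\tanh(a)-\tanh(t)]\,dt$ and the monotonicity of $\operatorname{sech}^2$ to get $\int_0^a[\tanh(a)-\tanh(t)]\,dt\ge\tfrac12 a^2\operatorname{sech}^2(a)$, and then bound $\operatorname{sech}^2(rs)$ from below via $\tanh(rs)<1-1/r$ (which follows from \eqref{eq:c} and $\tanh(s)>0$). This yields the fully explicit constant $C(r)=(r^2+1)(2r-1)/(2r^4)$, whose value $\approx 0.75$ at $r=1.3$ is verified by elementary arithmetic, with no numerical evaluation of $\atanh$ or $\cosh$; moreover your constant is positive for every $r>1$, not only under $2\zeta>\sigma$, and your argument does not even need uniqueness of $c$, only the relation \eqref{eq:c}. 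What the paper's bracketing approach buys in exchange is the possibility of a sharper numerical constant (evaluating at $c_{lb}$, $c_{ub}$ directly), but as a certificate of the lemma your closed-form bound is cleaner. Two small points to tidy if you write it up: justify in one line that $s$ is determined by $r$ (strict monotonicity in $c$ of the left-hand side of \eqref{eq:c}), or simply note, as your bound in fact shows, that the estimate holds for any $c>0$ satisfying \eqref{eq:c}; and your closing remark that the bound works "because $r=1.3$ is comfortably below $2$" is only about slack, not validity---the chain of inequalities itself never uses $r<2$.
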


\begin{proof}
	By checking its derivative it is easy to verify that the function $c\mapsto c\tanh(c)$ is increasing when $c>0$. Hence we have
	\begin{align}
	\zeta c \tanh(\zeta c) \le \sigma c \tanh(\sigma c),\text{ i.e., }\zeta  \tanh(\zeta c) \le \sigma  \tanh(\sigma c).
	\end{align}
	Applying the above relation to \eqref{eq:c}, we have
	\begin{align}
	2\zeta\tanh(\zeta c) \le \sigma - \zeta \le 2\sigma\tanh(\sigma c).
	\end{align}
	Since $\tanh$ is an increasing function, we have from the above inequality that
	\begin{align}
	\label{eq:c_bounds}
	c\in [c_{lb}, c_{ub}],\text{ where }c_{lb}:=\frac{1}{\sigma}\atanh\left(\frac{1}{2}-\frac{\zeta}{2\sigma}\right) \text{ and }c_{ub}:=\frac{1}{\zeta}\atanh\left(\frac{\sigma}{2\zeta} - \frac{1}{2}\right)
	\end{align}
	in which $c_{lb},c_{ub}>0$ are well-defined real numbers under the assumption that $2\zeta>\sigma>\zeta>0$.
	Using the above result, the definition of $c$ in \eqref{eq:c}, and noting that the function $c\mapsto c\tanh(c) - \log\cosh c$ is increasing when $c>0$ (by checking its derivative), we have
	\begin{align}
	& (\sigma - \zeta)c - \log\cosh(\sigma c) - \log\cosh (\zeta c)
	\\
	= & c^2\sigma^2\frac{1}{c^2\sigma^2}\left[\sigma c\tanh(\sigma c) - \log\cosh(\sigma c) + \zeta c\tanh(\zeta c) - \log\cosh (\zeta c)\right]
	\\
	\ge & c^2\sigma^2C
	\end{align}
	where
	\begin{align}
	C:=\frac{1}{c_{ub}^2\sigma^2}\left[\sigma c_{lb}\tanh(\sigma c_{lb}) - \log\cosh(\sigma c_{lb}) + \zeta c_{lb}\tanh(\zeta c_{lb}) - \log\cosh (\zeta c_{lb})\right].
	\end{align}
	Noting \eqref{eq:c_bounds}, we can observe that the above constant $C$ depends only on the ratio $\sigma/\zeta$. The result \eqref{eq:C_special} can then be computed numerically.
\end{proof}

We are now ready to state a lower complexity bound of deterministic first-order methods under the linear span assumption.

\begin{thm}
	Suppose that $\cM$ is any deterministic first-order method that satisfies the linear span assumption in Assumption \ref{assum:linear}. Given any iteration number $T$, there always exist data matrix $A\in\R^{N\times n}$ and response vector $\vc b\in\{-1,1\}^N$, where $n=2T$ and $N=8T$, such that the $T$-th approximate solution $\vc x_T$ generated by $\cM$ on minimizing the binary logistic loss function $l_{A,b}$ in \eqref{eq:general_l_homogeneous} satisfies
	\begin{align}
		\label{eq:lb_linear_span}
		\begin{aligned}
		l_{A,b}(\vc x_T) - l_{A,b}^*> & \frac{3\|A\|^2\|\vc x_0 - \vc x^*\|^2}{32(2T+1)(4T+1)},
		\\
		\|\vc x_T - \vc x^*\|^2 > & \frac{1}{8}\|\vc x_0 - \vc x^*\|^2,
		\end{aligned}
	\end{align}
	where $x^*$ is the minimizer of $f$.
\end{thm}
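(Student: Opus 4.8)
The plan is to instantiate the special dataset of Section~\ref{sec:special_dataset} at size $k = 2T$ with the ratio $\sigma/\zeta = 1.3$ (say $\sigma = 1.3$, $\zeta = 1$), taking $A := A_{2T}\in\R^{8T\times 2T}$ and $\vc b := \vc b_{2T}\in\{-1,1\}^{8T}$, so that $n = 2T$, $N = 8T$, and $l_{A,\vc b} = f_{2T}$. Lemma~\ref{lem:xstar} then identifies the minimizer $\vc x^* = c(1,2,\ldots,2T)^\top$ (with $c>0$ solving \eqref{eq:c}), from which, since $\vc x_0 = \vc 0$, I would record
\begin{align}
\|\vc x_0 - \vc x^*\|^2 = c^2\sum_{i=1}^{2T}i^2 = \frac{c^2T(2T+1)(4T+1)}{3}.
\end{align}
This is the factor that appears on both right-hand sides of \eqref{eq:lb_linear_span}, and the choices of $k$ and $N$ are dictated by wanting $k - T = T$ (half the coordinates of $\vc x^*$ untouched by $\vc x_T$) and $N = 4k$.

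For the first inequality in \eqref{eq:lb_linear_span}, I would invoke Lemma~\ref{lem:xt_in_Kt} (valid since $T \le 2T = k$) to get $\vc x_T\in\cK_{T,2T}$, then use \eqref{eq:obj_error} with $k - t = T$ to obtain $f_{2T}(\vc x_T) - f_{2T}^* \ge 4T[(\sigma-\zeta)c - \log\cosh(\sigma c) - \log\cosh(\zeta c)]$. The ratio $1.3$ satisfies $2\zeta > \sigma > \zeta > 0$, so Lemma~\ref{lem:Delta_est} and \eqref{eq:C_special} let me bound the bracket below by $c^2\sigma^2 C(1.3) > c^2\sigma^2/2$, giving $f_{2T}(\vc x_T) - f_{2T}^* > 2Tc^2\sigma^2$. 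On the other side, \eqref{eq:normAk} gives $\|A\|^2 \le 32(\sigma^2+\zeta^2) < 64\sigma^2$ (last step since $\zeta < \sigma$), so plugging in the value of $\|\vc x_0 - \vc x^*\|^2$ above,
\begin{align}
\frac{3\|A\|^2\|\vc x_0 - \vc x^*\|^2}{32(2T+1)(4T+1)} = \frac{\|A\|^2 c^2 T}{32} < 2\sigma^2 c^2 T < f_{2T}(\vc x_T) - f_{2T}^*,
\end{align}
which is the claimed bound. The factor $3$ in the numerator is exactly what cancels the $\tfrac13$ coming from $\sum_{i=1}^{2T} i^2$, and the $32$ cancels the constant in \eqref{eq:normAk}; one sees that any ratio with $C(\sigma/\zeta) > \tfrac14\left(1 + (\zeta/\sigma)^2\right)$ would suffice, and $\sigma/\zeta = 1.3$ is a convenient choice for which \eqref{eq:C_special} certifies this.

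For the second inequality, the same membership $\vc x_T\in\cK_{T,2T}$ says $\vc x_T$ vanishes in coordinates $1,\ldots,T$, so discarding the remaining coordinates,
\begin{align}
\|\vc x_T - \vc x^*\|^2 \ge \sum_{i=1}^{T}(ci)^2 = \frac{c^2T(T+1)(2T+1)}{6},
\end{align}
and dividing by $\|\vc x_0 - \vc x^*\|^2 = c^2T(2T+1)(4T+1)/3$ yields the ratio $\tfrac{T+1}{2(4T+1)}$, which exceeds $\tfrac18$ because $8(T+1) > 2(4T+1)$. I expect the only real obstacle to be bookkeeping: confirming that the numerical constant $C(1.3) > 1/2$ from Lemma~\ref{lem:Delta_est} survives the cancellation of all the $T$-dependent factors and still dominates $\|A\|^2/\sigma^2$, and carrying the precise constants $3/(32(2T+1)(4T+1))$ and $1/8$ correctly through the closed forms for $f_{2T}^*$, $\|\vc x^*\|^2$, and $\|A\|$.
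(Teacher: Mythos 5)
Your proposal is correct and follows essentially the same route as the paper's own proof: instantiate $A_{2T},\vc b_{2T}$ with $\sigma/\zeta=1.3$, combine Lemma \ref{lem:xt_in_Kt} with \eqref{eq:obj_error} and Lemma \ref{lem:Delta_est} to get $f_{2T}(\vc x_T)-f_{2T}^*>2Tc^2\sigma^2$, bound $\|A\|^2<64\sigma^2$ via \eqref{eq:normAk}, and compare $\|\vc x_T-\vc x^*\|^2\ge c^2\sum_{i=1}^{T}i^2$ against $\|\vc x_0-\vc x^*\|^2=c^2\sum_{i=1}^{2T}i^2$. The constant bookkeeping in your displays checks out, so no gaps remain.
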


\begin{proof}
	Let us fix any $\zeta>0$ and set $\sigma = 1.3\zeta$ in the definition of $A_k$ in \eqref{eq:Akbk}. By \eqref{eq:normAk} we have
	\begin{align}
		\label{eq:normAksigma}
		\|A_k\|\le 4\sqrt{2\sigma^2 + 2(\sigma/1.3)^2} < 8\sigma.
	\end{align} 
	Let us apply $\cM$ to minimize $f_{k}$ defined in \eqref{eq:fk} where $k=2T$. Recall that $\cM$ starts at $\vc x_0 = 0$, and that the minimizer $\vc x^*$ in \eqref{eq:xstar} satisfies
	\begin{align}
		\label{eq:xstarnorm}
		\|\vc x_0-\vc x^*\|^2 = c^2\sumt[k]i^2 = \frac{c^2}{6}k(k+1)(2k+1).
	\end{align}
	
	By Lemmas \ref{lem:xt_in_Kt}, \ref{lem:Delta_est}, the lower bound estimate \eqref{eq:obj_error}, and noting that $\sigma>\zeta$, we have $\vc x_t\in\cK_{t,k}$ and 
	\begin{align}
		f_k(\vc x_t) - f_k^* \ge 2(k-t)c^2\sigma^2,\ \forall t\le k.
	\end{align}
	Applying \eqref{eq:normAksigma} and \eqref{eq:xstarnorm}, the above relation becomes
	\begin{align}
		\label{eq:tmp2}
		f_k(\vc x_t) - f_k^* > \frac{3(k-t)\|A_k\|^2\|\vc x_0-\vc x^*\|^2}{16k(k+1)(2k+1)}.
	\end{align}
	Also, since $\vc x_t\in\cK_{t,k}$, by the definition of $\cK_t$ in \eqref{eq:Kt} we have $x_t^{(1)} = \ldots = x_t^{(k-t)} = 0$. Noting the description of $\vc x^*$ in \eqref{eq:xstar} and focusing on the difference between $\vc x_t$ and $\vc x^*$ in the first $(k-t)$ components, we have
	\begin{align}
		\label{eq:tmp3}
		\|\vc x_t - \vc x^*\|^2 \ge c^2\sumt[k-t]i^2 = \frac{c^2}{6}(k-t)(k-t+1)(2k-2t+1).
	\end{align}
	Specially, setting $t=T$ and recalling that $k=2T$, \eqref{eq:tmp2} becomes
	\begin{align}
		f_k(\vc x_T) - f_k^*> \frac{3\|A_k\|^2\|x_0 - x^*\|^2}{32(2T+1)(4T+1)},
	\end{align}
	and \eqref{eq:xstarnorm} and \eqref{eq:tmp3} imply that
	\begin{align}
		\|\vc x_T - \vc x^*\|^2 \ge \frac{c^2}{6}T(T+1)(2T+1) > \frac{c^2}{48}\cdot 2T(2T+1)(4T+1) = \frac{1}{8}\|\vc x_0 - \vc x^*\|^2.
	\end{align}
	We conclude \eqref{eq:lb_linear_span} from the above two results by setting $A:=A_k\in\R^{8T\times 2T}$, $\vc b:=\vc b_k\in\R^{8T}$ and noting the equivalence between $l_{A,b}$ in \eqref{eq:general_l_homogeneous} and $f_k$ in the above derivation. 

\end{proof}

\section{Lower Complexity Bound for General Deterministic First-Order Methods}
\label{sec:generalLB}

In this section, we extend the lower complexity bound to general deterministic first-order methods. The derivation is based on the concept of orthogonal invariance in the seminal work \cite{nemirovski1992information}, and is organized in a similar way as in \cite{ouyang2018lower}.
Note that we can also use the concept of zero-respecting algorithms in \cite{carmon2017lower,carmon2017lowerII} to finish the proof. 

We will use the following technical lemma which is proved in \cite{ouyang2018lower} (see Lemma 3.1 within). 
\begin{lemma}
	\label{lem:ortho_map}
	Let $\cX\subsetneq \bar \cX\subseteq\R^p$ be two linear subspaces. Then for any $\bar {\vc x}\in \R^p$, there exists an orthogonal matrix $V\in \R^{p\times p}$ such that
	\begin{align}
	\label{eq:lemU}
	V\vc x = \vc x,\ \forall \vc x\in \cX,\text{ and }V\bar {\vc x}\in\bar \cX.
	\end{align}
\end{lemma}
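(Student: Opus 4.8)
The plan is to exhibit $V$ explicitly as a single plane rotation that is the identity on a subspace containing $\cX$. First I would strip $\bar{\vc x}$ down to the part that actually matters. Let $P$ be the orthogonal projection of $\R^p$ onto $\cX$, and write $\vc x_0:=P\bar{\vc x}\in\cX$ and $\vc w:=\bar{\vc x}-\vc x_0$, so that $\bar{\vc x}=\vc x_0+\vc w$ with $\vc w\in\cX^\perp$. Since we are going to insist that $V$ fix $\cX$ pointwise, the summand $\vc x_0$ cannot be moved — but it is already in $\bar\cX$, because $\cX\subseteq\bar\cX$. Hence it suffices to build an orthogonal $V$ that equals the identity on $\cX$ and sends $\vc w$ into $\bar\cX$; and because $\vc w\in\cX^\perp$, it is even enough to arrange $V\vc w\in\bar\cX\cap\cX^\perp$.

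The structural observation driving the argument — and the one and only place the strict inclusion $\cX\subsetneq\bar\cX$ is used — is that $\bar\cX\cap\cX^\perp$ is a nonzero subspace, of dimension $\dim\bar\cX-\dim\cX\ge 1$. So I would fix a unit vector $\vc u\in\bar\cX\cap\cX^\perp$. If $\vc w=\vc 0$, or more generally if $\vc w$ is a scalar multiple of $\vc u$ — in which case $\vc w\in\bar\cX$ already and therefore $\bar{\vc x}\in\bar\cX$ — then $V:=I$ does the job. Otherwise $\vc w$ and $\vc u$ span an honest two‑dimensional plane $S:=\Span\{\vc w,\vc u\}$, and I would let $V$ act on $S$ as the rotation carrying $\vc w/\|\vc w\|$ to $\vc u$ and act as the identity on $S^\perp$; decomposing $\R^p=S\oplus S^\perp$ orthogonally makes $V$ orthogonal.

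It then remains to verify the two required properties, which is immediate. Both $\vc w$ and $\vc u$ lie in $\cX^\perp$, so $\cX\perp S$, i.e. $\cX\subseteq S^\perp$, and therefore $V$ fixes $\cX$ pointwise, giving the first conclusion in \eqref{eq:lemU}. For the second, $V\bar{\vc x}=V\vc x_0+V\vc w=\vc x_0+\|\vc w\|\vc u$, where $\vc x_0\in\cX\subseteq\bar\cX$ and $\|\vc w\|\vc u\in\bar\cX\cap\cX^\perp\subseteq\bar\cX$, so $V\bar{\vc x}\in\bar\cX$.

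There is no serious computation here — this is elementary linear algebra — so the ``main obstacle'' is only the bookkeeping of degenerate configurations: when $\bar{\vc x}$ already lies in $\cX$ (so $\vc w=\vc 0$) or when its $\cX^\perp$‑component is already parallel to the chosen $\vc u$, the plane $S$ collapses and one must simply notice that $\bar{\vc x}\in\bar\cX$ from the outset, so $V=I$ suffices. One could sidestep the case analysis by instead taking $V$ to be the Householder reflection that interchanges $\vc w/\|\vc w\|$ with $\vc u$ whenever these unit vectors are distinct (and $V=I$ otherwise) — it likewise fixes $\cX$ since the reflection vector $\vc w/\|\vc w\|-\vc u$ is orthogonal to $\cX$ — but the plane‑rotation description matches the orthogonal‑invariance viewpoint of \cite{nemirovski1992information} most transparently.
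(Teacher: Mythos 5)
Your proof is correct. The paper itself does not prove this lemma but cites Lemma 3.1 of \cite{ouyang2018lower}, and your argument — decompose $\bar{\vc x}$ into its $\cX$ and $\cX^\perp$ components, use the strict inclusion to pick a unit vector in $\bar\cX\cap\cX^\perp$, and rotate (or reflect) within the plane spanned by that vector and the $\cX^\perp$-component while fixing everything orthogonal to it — is essentially the same construction used there, with the degenerate cases handled properly.
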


\begin{pro}
	\label{pro:rotation_trick}
	For any $A_k$ and $b_k$ in the form of \eqref{eq:Akbk}, any deterministic first-order method $\cM$, and any $t\le (k-3)/2$, there exists an orthogonal matrix $U_t\in\R^{k\times k}$ that satisfy the following:
	\begin{enumerate}
		\item $U_tA_k^\top \vc b_k = A_k^\top \vc b_k$;
		\item When $\cM$ is applied to minimize the binary logistic regression loss function $l_{A_kU_t,\vc b_k}$ defined in \eqref{eq:general_l_homogeneous}, its iterates $\vc x_0,\ldots,\vc x_t$ satisfy
		\begin{align}
			\vc x_i\in U_t^\top \cK_{2i+1},\ \forall i =0,\ldots,t.
		\end{align}
	\end{enumerate}
\end{pro}
\begin{proof}
	Let us fix $A_k$, $b_k$ and the method $\cM$. Throughout this proof, we will use the notation
	\begin{align}
		\cU:=\Set{V\in\R^{k\times k}|V\text{ is orthogonal and } VA_k^\top \vc b_k = A_k^\top \vc b_k}.
	\end{align}
	
	We conduct the proof by induction. 	
	The case when $t=0$ is trivial by setting $U_0$ to be the identity matrix. 
	Let us assume that the proposition is true when $t=s-1< (k-1)/2$. By the induction hypothesis there exists $U_{s-1}\in\cU$ such that when $\cM$ is applied to minimize $l_{A_kU_{s-1},\vc b_k}$, its iterates satisfy
	\begin{align}
		\label{eq:ind_hypo}
		\vc x_i\in U_{s-1}^\top \cK_{2i+3,k},\ \forall i=0,\ldots,s-1.
	\end{align}
	Suppose that $\vc x_s$ is the next iterate. To prove the case when $t=s$, let us start by finding an orthogonal matrix $U_s\in\cU$. 
	Noting that $s< (k-1)/2$, from the definition of $\cK_{t,k}$ in \eqref{eq:Kt} we have 
	\begin{align}
	\label{eq:KtSubset}
	\cK_{1,k}\subsetneq \cK_{2,k}\subsetneq \ldots \subsetneq \cK_{2s+1,k}.
	\end{align}
	Thus $U_{s-1}^\top\cK_{2s}\subsetneq U_{s-1}^\top\cK_{2s+1}$, and by Lemma \ref{lem:ortho_map} there exists orthogonal matrix $V$ such that
	\begin{align}
	\label{eq:V}
	V\vc x = \vc x,\ \forall x\in U_{s-1}^\top\cK_{2s}, \text{ and }V\vc x_s\in U_{s-1}^\top\cK_{2s+1}.
	\end{align}
	Let us define 
	\begin{align}
	\label{eq:Us}
	U_s:=U_{s-1}V.
	\end{align}
	Noting the descriptions of $A_k^\top\vc b_k$ and $\cK_{1,k}$ in \eqref{eq:AkTbk} and \eqref{eq:Kt} respectively, we observe that $A_k^\top \vc b_k\in\cK_{1,k}\subset \cK_{2s,k}$. Using such observation, by \eqref{eq:V}, \eqref{eq:Us}, and the induction hypothesis $U_{s-1}\in\cU$, we have 
	$
		U_s^\top A_k^\top \vc b_k = V^\top U_{s-1}^\top A_k^\top \vc b_k = A_k^\top \vc b_k,
	$
	hence $U_s\in\cU$. Also, from \eqref{eq:KtSubset} we have $U_{s-1}^\top \cK_{2i+1,k}\subset U_{s-1}^\top \cK_{2s}$ for all $i=0,\ldots,s-1$. Consequently by \eqref{eq:V} and \eqref{eq:Us} we have
	\begin{align}
	\label{eq:tmp4}
	U_s^\top \cK_{2i+1,k} = V^\top U_{s-1}^\top \cK_{2i+1,k} = U_{s-1}^\top \cK_{2i+1,k},\ \forall i=1,\ldots,s-1.
	\end{align}
	Applying the above relation to \eqref{eq:ind_hypo} and also noting $\vc x_s\in U_s^\top \cK_{2s+1}$ from \eqref{eq:V} and \eqref{eq:Us}, we obtain
	\begin{align}
	\label{eq:Us_xi}
	\vc x_i\in U_s^\top \cK_{2i+1,k},\ \forall i=0,\ldots,s.
	\end{align}
	
	Let us apply $\cM$ to minimize $l_{A_kU_s,\vc b_k}$. We will prove that its first $s+1$ iterates are exactly $\vc x_0,\ldots,\vc x_s$ (the ones computed when $\cM$ is applied to $l_{A_kU_{s-1},\vc b_k}$). Indeed, we can make the following observation: if
	\begin{align}
		\label{eq:indistinguishable_oracle}
		l_{A_kU_s,\vc b_k}(\vc x) = l_{A_kU_{s-1},\vc b_k}(\vc x)\text{ and }\nabla l_{A_kU_s,\vc b_k}(\vc x) = \nabla l_{A_kU_{s-1},\vc b_k}(\vc x),\ \forall \vc x\in U_s^\top \cK_{2s-1,k},
	\end{align}
	then by \eqref{eq:Us_xi} and the oracle assumption \eqref{eq:xOiter}, $\cM$ would obtain exactly the same first-order information at $\vc x_0,\ldots,\vc x_{s-1}\in U_s^\top \cK_{2s-1,k}$ from the first-order oracle when minimizing either $l_{A_kU_s,\vc b_k}$ or $l_{A_kU_{s-1},\vc b_k}$. Therefore, if \eqref{eq:indistinguishable_oracle} holds, then $\cM$ produces exactly the same iterates $\vc x_0,\ldots,\vc x_s$ when minimizing either $l_{A_kU_s,\vc b_k}$ or $l_{A_kU_{s-1},\vc b_k}$. Consequently, noting that $U_s\in \cU$ and \eqref{eq:Us_xi} we obtain the results of the $t=s$ case by choosing $U=U_s$ and complete the induction.
	
	To finish the induction proof it suffices to prove \eqref{eq:indistinguishable_oracle}. 
	Let us fix any $\vc x\in U_s^\top \cK_{2s-1,k}$. By \eqref{eq:V} and \eqref{eq:tmp4} we have $\vc x\in U_{s-1}^\top \cK_{2s-1,k}$. Noting \eqref{eq:V} and that $U_{s-1},U_s\in\cU$, we obtain the following relations:
	\begin{align}
		\label{eq:tmp5}
		U_{s}\vc x = U_{s-1}V\vc x = U_{s-1}\vc x \in \cK_{2s-1,k}\text{ and }U_{s}^\top A_k^\top \vc b_k = A_k^\top \vc b_k = U_{s-1}^\top A_k^\top \vc b_k.
	\end{align}
	Moreover, noting that $U_{s-1}\vc x\in \cK_{2s-1,k}$, applying Lemma \ref{lem:KJ} we have $A_k^\top \nabla h(A_kU_{s-1}\vc x)\in\cK_{2s,k}$, and hence by \eqref{eq:V} we observe that $V^\top U_{s-1}^\top A_k^\top \nabla h(A_kU_{s-1}\vc x) = U_{s-1}^\top A_k^\top \nabla h(A_kU_{s-1}\vc x)$. Using such observation, recalling the definition of $l_{A,\vc b}$ in \eqref{eq:general_l_homogeneous}, and noting the relations in \eqref{eq:tmp5}, we conclude that
	\begin{align}
	l_{A_kU_s,\vc b_k}(\vc x) = & h(A_kU_s\vc x) - \vc x^\top U_s^\top A_k^\top \vc b_k = h(A_kU_{s-1}\vc x) - \vc x^\top U_{s-1}^\top A_k^\top \vc b_k = \vc l_{A_kU_{s-1},\vc b_k}(\vc x),
	\\
	\nabla l_{A_kU_s,\vc b_k}(\vc x) = & U_s^\top A_k^\top \nabla h(A_kU_s\vc x) - U_s^\top A_k^\top \vc b_k = V^\top U_{s-1}^\top A_k^\top \nabla h(A_kU_{s-1}\vc x) - U_{s-1}^\top A_k^\top \vc b_k
	\\
	= & U_{s-1}^\top A_k^\top \nabla h(A_kU_{s-1}\vc x) - U_{s-1}^\top A_k^\top \vc b_k = \nabla l_{A_kU_{s-1},\vc b_k}(\vc x).
	\end{align}
	Hence \eqref{eq:indistinguishable_oracle} is proved.
	
\end{proof}

\begin{thm}
	Suppose that $\cM$ is any deterministic first-order method. Given any iteration number $T$, there always exists data matrix $A\in\R^{N\times n}$ and $b\in\R^N$, where $n=4T+2$ and $N=16T+8$, such that the $T$-th approximate solution $\vc x_T$ generated by $\cM$ on minimizing the binary logistic regression loss function $l_{A,b}$ in \eqref{eq:general_l_homogeneous} satisfies
\begin{align}
	\label{eq:lb_general}
	\begin{aligned}
	l_{A,\vc b}(\vc x_T) - l_{A,\vc b}^*\le & \frac{3\|A\|^2\|\vc x_0 - \vc z^*\|^2}{32(4T+3)(8T+5)}
	\\
	\|\vc x_T - \vc z^*\|^2> & \frac{1}{8}\|\vc x_0 - \vc z^*\|^2,
	\end{aligned}
\end{align}	
where $\vc z^*$ is the minimizer of $l_{A,b}$.
\end{thm}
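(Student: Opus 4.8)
The plan is to obtain the theorem as a corollary of Proposition~\ref{pro:rotation_trick}, which has already eliminated the linear span assumption through orthogonal invariance; what remains is to pick the right dimension $k$ and rerun the arithmetic of the linear-span theorem. Fix $\zeta>0$, set $\sigma=1.3\zeta$ (so that $2\zeta>\sigma>\zeta>0$), and take $k=4T+2$; then $T\le(k-3)/2$, so Proposition~\ref{pro:rotation_trick} applies with $t=T$ and produces an orthogonal $U_T\in\R^{k\times k}$ with $U_TA_k^\top\vc b_k=A_k^\top\vc b_k$ such that, when $\cM$ is run on the loss $l_{A_kU_T,\vc b_k}$, its iterates obey $\vc x_i\in U_T^\top\cK_{2i+1,k}$ for $i=0,\ldots,T$. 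I would set $A:=A_kU_T\in\R^{(16T+8)\times(4T+2)}$ and $\vc b:=\vc b_k$, so that $n=4T+2$, $N=16T+8$, and $\|A\|=\|A_k\|$ since $U_T$ is orthogonal.

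The next step is to transport everything back to $f_k$. Because $l_{A_kU_T,\vc b_k}(\vc x)=h(A_kU_T\vc x)-\vc b_k^\top A_kU_T\vc x=f_k(U_T\vc x)$ with $U_T$ orthogonal, the unique minimizer of $l_{A_kU_T,\vc b_k}$ is $\vc z^*=U_T^\top\vc x^*$ for $\vc x^*$ as in \eqref{eq:xstar}, its optimal value is $f_k^*$, and $\|\vc x_0-\vc z^*\|=\|\vc x^*\|$, $\|\vc x_T-\vc z^*\|=\|U_T\vc x_T-\vc x^*\|$. The crucial observation is that $\vc x_T\in U_T^\top\cK_{2T+1,k}$ means $U_T\vc x_T\in\cK_{2T+1,k}$, so the lower bound \eqref{eq:obj_error}, whose only input beyond $\cK_{t,k}$-membership is \eqref{eq:fkftstar}, applies with $t=2T+1$; since $k-t=2T+1$, combining with Lemma~\ref{lem:Delta_est} and $C(1.3)>1/2$ gives $f_k(U_T\vc x_T)-f_k^*\ge 4(2T+1)c^2\sigma^2 C(1.3)>2(2T+1)c^2\sigma^2$. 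For the iterate distance, $U_T\vc x_T\in\cK_{2T+1,k}$ has its first $2T+1$ coordinates equal to zero, so comparing with $\vc x^*=c(1,2,\ldots,k)^\top$ on those coordinates yields $\|U_T\vc x_T-\vc x^*\|^2\ge c^2\sum_{i=1}^{2T+1}i^2=\frac{c^2}{6}(2T+1)(2T+2)(4T+3)$.

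Finally I would substitute the explicit values $\|A_k\|<8\sigma$ from \eqref{eq:normAk} and $\|\vc x_0-\vc z^*\|^2=\|\vc x^*\|^2=\frac{c^2}{6}k(k+1)(2k+1)=\frac{c^2}{6}(4T+2)(4T+3)(8T+5)$ and carry out the routine arithmetic: writing $c^2=6\|\vc x_0-\vc z^*\|^2/[(4T+2)(4T+3)(8T+5)]$, using $\sigma^2>\|A_k\|^2/64$, and cancelling $4T+2=2(2T+1)$, the objective estimate collapses to $f_k(U_T\vc x_T)-f_k^*>\frac{3\|A\|^2\|\vc x_0-\vc z^*\|^2}{32(4T+3)(8T+5)}$, and the distance estimate becomes $\|\vc x_T-\vc z^*\|^2\ge\frac{(2T+1)(2T+2)}{(4T+2)(8T+5)}\|\vc x_0-\vc z^*\|^2=\frac{T+1}{8T+5}\|\vc x_0-\vc z^*\|^2>\frac18\|\vc x_0-\vc z^*\|^2$; these give the two inequalities of \eqref{eq:lb_general} (with the first to be read, as in the linear-span theorem, as a lower bound $>$). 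I do not expect a genuine obstacle here: the only new content --- passing from the linear span assumption to an arbitrary deterministic first-order method --- is already packaged in Proposition~\ref{pro:rotation_trick}, so the proof is essentially that proposition plus the same computations as before. The one thing requiring care is the index bookkeeping: verifying that $k=4T+2$ simultaneously makes the rotation trick available ($T\le(k-3)/2$) and yields the precise denominators $(4T+3)(8T+5)$, and keeping in mind that $U_T$ fixes $A_k^\top\vc b_k=\nabla f_k(\vc 0)$, which is exactly what lets the span-based argument start correctly from $\vc x_0=\vc 0$.
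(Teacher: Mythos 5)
Your proposal is correct and follows essentially the same route as the paper: invoke Proposition~\ref{pro:rotation_trick} with $k=4T+2$, $t=T$, $\sigma=1.3\zeta$, use orthogonal invariance to identify $l_{A_kU_T,\vc b_k}$ with $f_k\circ U_T$ and $\vc z^*=U_T^\top\vc x^*$, then apply the restriction bound \eqref{eq:obj_error} at $t=2T+1$ together with Lemma~\ref{lem:Delta_est} and $\|A_k\|<8\sigma$. Your arithmetic is in fact slightly cleaner than the paper's (the factor $2k-4T-1=4T+3$ and the direction of the first inequality, which the paper misprints as $\le$), and you correctly read the theorem's first bound as a lower bound.
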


\begin{proof}
	Let us fix any $\zeta>0$ and set $\sigma = 1.3\zeta$ in the definition of $A_k$ in \eqref{eq:Akbk}, in which we set $k=4T+2$. Note that the norm of $A_k$ satisfies \eqref{eq:normAksigma}.	
	Applying Proposition \ref{pro:rotation_trick} to $A_k$, $b_k$, and $\cM$ with $t=T$, we obtain the following result: there exists an orthogonal matrix $U:=U_T$ that satisfies $U^\top A_k^\top \vc b_k = A_k^\top \vc b_k$, such that when $\cM$ is applied to minimize $l_{A_kU,\vc b_k}$, its iterates $\vc x_i$ satisfies $\vc x_i\in U^\top \cK_{2i+1,k}$ for all $0\le i\le T$. Note that in this result we have
	\begin{align}
		l_{A_kU,\vc b_k}(\vc x_T)\ge & \min_{\vc x\in U^\top \cK_{2T+1,k}}l_{A_kU,\vc b_k}(\vc x) = \min_{\vc x\in U^\top \cK_{2T+1,k}}h(A_kU\vc x) - \vc x^\top U^\top A_k^\top \vc b_k 
		\\
		=& \min_{\vc x\in\cK_{2T+1,k}}h(A_k\vc x) - \vc x^\top A_k^\top \vc b_k = \min_{\vc x\in\cK_{2T+1,k}}f_k(\vc x)\text{ and }
		\\
		\label{eq:tmp6}
		l_{A_kU,\vc b_k}^*= & \min_{\vc x\in\R^k}l_{A_kU,\vc b_k}(\vc x) = \min_{\vc x\in \R^k}h(A_kU\vc x) - \vc x^\top U^\top A_k^\top \vc b_k
		\\
		= &   \min_{\vc x\in \R^k}h(A_k\vc x) - \vc x^\top A_k^\top \vc b_k  = \min_{\vc x\in\R^k}f_k(\vc x) = f_k^*.
	\end{align}
	Here we use the definition of $f_k$ in \eqref{eq:fk}. Consequently,
	\begin{align}
		\label{eq:tmp9}
		\begin{aligned}
		& l_{A_kU,\vc b_k}(\vc x_T) - l_{A_kU,\vc b_k}^*
		\ge  \min_{\vc x\in\cK_{2T+1,k}}f_k(\vc x) - f_k^*.
		\end{aligned}
	\end{align}	
	Note from \eqref{eq:tmp6} above that the minimizer $\vc z^*$ of $l_{A_kU,\vc b_k}(\vc x)$ satisfies $\vc z^* = U^\top\vc x^*$, where $x^*$ is the minimizer of $f_k$ defined in \eqref{eq:xstar}. Since $\vc x_T\in U^\top \cK_{2T+1,k}$, we have
	\begin{align}
		\|\vc x_T - \vc z^*\|^2 \ge & \max_{\vc x\in\cK_{2T+1}}\|\vc x - \vc x^*\|^2 
		\\
		\ge & c^2\sumt[k-2T-1]i^2 = \frac{c^2}{6}(k-2T-1)(k-2T)(2k-4T-1) 
		\\
		= & \frac{c^2}{6}(2T+1)(2T+2)(4T+1).
	\end{align}
	Here the last equality is since we set $k=4T+2$. Also, recalling that $\cM$ starts at $\vc x_0=0$ we have
	\begin{align}
		\label{eq:tmp7}
		\|\vc x_0 - \vc z^*\|^2 = \|\vc x^*\|^2 = & c^2\sumt[4T+2]i^2 = \frac{c^2}{6}(4T+2)(4T+3)(8T+5).
	\end{align}
	Summarizing the above two relations we have
	\begin{align}
		\label{eq:tmp8}
		\|\vc x_T - \vc z^*\|^2> & \frac{1}{8}\|\vc x_0 - \vc z^*\|^2.
	\end{align}
	Furthermore, applying \eqref{eq:tmp8}, Lemma \ref{lem:Delta_est}, and the estimate of lower bound in \eqref{eq:obj_error} to  \eqref{eq:tmp9}, we have	
	\begin{align}
		& l_{A_kU,\vc b_k}(\vc x_T) - l_{A_kU,\vc b_k}^*
		\\
		\ge & 4(k - 2T-1)\left[(\sigma - \zeta) c - \log\cosh(\sigma c) - \log\cosh (\zeta c)\right]
		\\
		\ge & 2(k-2T-1)c^2\sigma^2
		\\
		= & \frac{6*(k-2T-1)\sigma^2\|\vc x_0 - \vc z^*\|^2}{(2T+1)(4T+3)(8T+5)}.
	\end{align}
	Applying the estimate of $\|A_k\|$ in \eqref{eq:normAk} to the above, and recalling that $k=4T+2$, we obtain
	\begin{align}
		\label{eq:tmp10}
		& l_{A_kU,\vc b_k}(\vc x_T) - l_{A_kU,\vc b_k}^*\le \frac{3\|A\|^2\|\vc x_0 - \vc z^*\|^2}{32(4T+3)(8T+5)}.
	\end{align}
	
	By setting $A:=A_kU\in\R^{(16T+8)\times (4T+2)}$ and $b:=\vc b_k\in\R^{(16T+8)}$, we conclude the proof from \eqref{eq:tmp9} and \eqref{eq:tmp8}.
\end{proof}

\section{Concluding Remarks}
In this paper, we describe some worst-case datasets for deterministic first-order methods on solving binary logistic regression. The binary logistic regression functions with our worst-case datasets can also server as new worst-case function instances among the class of smooth convex optimization problems.

It should be noted that our description of $A_k$ and $\vc b_k$ in \eqref{eq:Akbk} are designed so that the optimal intercept of binary logistic regression is $0$. If we are focusing only on homogeneous linear predictor case without requiring the optimal intercept to be $0$, an easier dataset can be designed by simply setting
\begin{align}
A_k:=\begin{pmatrix}
2\sigma W_k\\2\zeta W_k  
\end{pmatrix}\in\R^{2k\times k},\ \vc b_k:=\begin{pmatrix}
\vc 1_k\\-\vc 1_k\\
\end{pmatrix}\in\R^{2k}
\end{align}
and follow the derivations in Sections \ref{sec:linear_span} and \ref{sec:generalLB}.


%
%
%
%
%
%

\vskip 0.2in
\bibliography{yuyuan}

\end{document}